\setlist{itemsep=0em, topsep=0em, parsep=0em}
\setlist[enumerate]{label=(\alph*)}
\definecolor{mycolor}{rgb}{0,0,0.7}
\renewcommand{\epsilon}{\varepsilon}
\newcommand{\op}[1]{\operatorname{#1}}
\newcommand{\cat}[1]{\mathbf{#1}}
\renewcommand{\t}[1]{\textup{#1}}
\newcommand{\from}{\colon}
\renewcommand{\span}{\xrightarrow{\mathit{sp}}}
\newcommand{\cospan}{\xrightarrow{\mathit{csp}}}
\newcommand{\csC}{\widetilde{\mathbf{C}}}
\newcommand{\diagram}[1]{\raisebox{-0.5\height}{\includegraphics{#1}}}
\DeclareMathOperator{\id}{id}
\DeclareMathOperator{\ob}{Ob}
\DeclareMathOperator{\Sub}{Sub}
\newtheorem{thm}{Theorem}[section]
\newtheorem{lem}[thm]{Lemma}
\theoremstyle{remark}
\theoremstyle{definition}
\newtheorem{ex}[thm]{Example} 
\newtheorem{defn}[thm]{Definition}
\begin{document}

%\tableofcontents

\begin{abstract}
	We explore the notion of a span of cospans and define, for them, horizonal and vertical composition.  These compositions satisfy the interchange law if working in a topos $\cat{C}$ and if the span legs are monic. A bicategory is then constructed from $\cat{C}$-objects, $\cat{C}$-cospans, and doubly monic spans of $\cat{C}$-cospans. The primary motivation for this construction is an application to graph rewriting.
\end{abstract}

\title{Spans of cospans}
\author{Daniel Cicala}
\maketitle

%%%%%%%%%%%%%%%%%%%%%%%%%%%%%%%%%%%%%%%%%%%%%%%%%%%%%%%%%%%%%%%%%%%%%%
%
\section{Introduction} %                                INTRODUCTION
%
%%%%%%%%%%%%%%%%%%%%%%%%%%%%%%%%%%%%%%%%%%%%%%%%%%%%%%%%%%%%%%%%%%%%%%%%

There is currently interest in studying complex networks through the simpler networks of which they are comprised. This point of view is known as \textit{compositionality}. Various flavors of graphs (directed, weighted, colored, etc.) play an important role in this program because they are particularly well suited to model networks \cite{Baez_CompFrameMarkovProcess,Baez_CompFrameLinearNetworks,RoseSabadinWalters_SepAlgNCospansGraphs,RoseSabadinWalters_CalcColimsComp}. By adding a bit of structure to graphs, we can decide how to glue graphs together to make larger graphs.  

The structure that we want to consider is given by choosing two subsets of nodes, named \textit{inputs} and \textit{outputs}. When the inputs of one graph equal the outputs of another, we can glue the graphs together. One method for adding this structure is to use a cospan of graphs $I \to G \leftarrow O$ where $I$ and $O$ are discrete. Then gluing graphs together becomes a matter of composing cospans, which B\'{e}nabou \cite{Benabou_Bicats} described using pushouts in the context of a bicategory whose morphisms are cospans and $2$-morphisms are maps of cospans. Rebro \cite{Rebro_Span2} extended this idea to give a bicategory whose $2$-morphisms are cospans of cospans.  See the figure below to see what this means.    

\begin{figure}
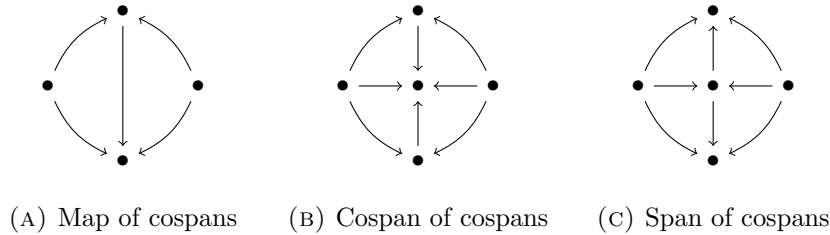

	\centering	
	\begin{minipage}[b]{0.3\textwidth}
	\[
		\diagram{Fig_MapOfCospans}
	\]
	\subcaption{Map of cospans}
	\label{fig.MapOfCospans}
	\end{minipage}
	\begin{minipage}[b]{0.3\textwidth}
	\[
		\diagram{Fig_CospanOfCospans}
	\]
	\subcaption{Cospan of cospans}
	\label{fig.CospanOfCospans}
	\end{minipage}
	\begin{minipage}[b]{0.3\textwidth}
	\[
		\diagram{Fig_SpanOfCospans}
	\]
	\subcaption{Span of cospans}
	\label{fig.SpanOfCospans}
\end{minipage}
\caption{Various morphisms of cospans}
\end{figure}

The goal of this paper is to explore another tool to study networks: spans of cospans (see Figure \ref{fig.SpanOfCospans}). Grandis and Par\'{e} studied these in the context of intercategories and showed that lax interchange held.  We will build a bicategory from certain spans of cospans.  Both B\'{e}nabou and Rebro only required sufficient colimits to compose maps of cospans and cospans of cospans, respectively. For us, we require sufficient limits and colimits while also ensuring they play well together. For this reason, we start with a topos $\cat{C}$, then let our $0$-cells be the objects of $\cat{C}$, $1$-cells the cospans in $\cat{C}$, and $2$-cells isomorphism classes of spans (with monic legs) of cospans. Our main theorem is that this construction, named $\cat{MonSp(Csp(C))}$, really gives a bicategory. 

The specific motivation for this particular construction is to create a framework to house the rewriting of graphs (with inputs and outputs).  Applying our construction to the topos $\cat{Graph}$, we consider $\cat{Rewrite}$, the full sub-bicategory of $\cat{MonSp(Csp(Graph))}$ whose objects are the discrete graphs. The $2$-cells of $\cat{Rewrite}$ represent all possible ways to rewrite one graph into another that respect the inputs and outputs. In this paper, graph rewrites are performed using the double pushout method. This is explained in Section \ref{sec.Rewriting}, though \cite{Ehrig_GraphGramAlgAp} and \cite{LackSoboc_AdhesiveCategories} contain more detailed accounts. Here is an example of what a $2$-cells in $\cat{Rewrite}$ will look like:
\[
	\label{NoRef_IntroRewrite2Cell}
	\diagram{NoRef_IntroRewrite2Cell}
\]
Inputs are depicted with `$\circ$', outputs with `$\square$', and nodes depicted with `$\bullet$' are neither. The picture above illustrates the rewriting of the top graph to the bottom graph by deleting an edge and adding a loop. 

The structure of this paper is as follows. We begin Section \ref{sec.SpansOfCospans} by defining spans of cospans and two ways to compose them. After declaring that we will be working in a topos and that the legs of the spans of cospans will be monic, we show that the compositions satisfy an interchange law.  This section ends with a construction of a bicategory whose $2$-cells are spans of cospans. Strictly speaking, there is no need for the full strength of a topos, so in Section \ref{sec.Disc on Assump}, we discuss how we might weaken our assumptions so that interchange still holds.  Then in Section \ref{sec.Rewriting} we give a brief introduction to graph rewriting and present our motivating example: $\cat{Rewrite}$, a bicategory that contains all possible ways to rewrite graphs.

The author would like to thank John Baez for many helpful discussions as well as Blake Pollard and Jason Erbele for contributing the Boolean algebra counterexample in Section \ref{sec.Disc on Assump}.
%
%
%
%
%
%
%
%
%
%
%
%
%%%%%%%%%%%%%%%%%%%%%%%%%%%%%%%%%%%%%%%%%%%%%%%%%%%%%%%%%%%%%%%%%%%%%%%%%%%%
%
\section{Spans of cospans} %                           SPAN OF COSPANS
\label{sec.SpansOfCospans}
%
%%%%%%%%%%%%%%%%%%%%%%%%%%%%%%%%%%%%%%%%%%%%%%%%%%%%%%%%%%%%%%%%%%%%%%%%%%%

We begin by introducing spans of cospans.  Given that our motivation is to have these be $2$-cells in some bicategory, we also show how to compose them.  In fact, there are two ways to do so and these will correspond to what will eventually be horizontal and vertical composition. Of course, we would like for these compositions to play nicely together and so we finish this section by showing that an interchange law holds, under certain assumptions. 

Before we begin, a few remarks on convention are in order. First, throughout this paper, tailed arrows ``$\rightarrowtail$" refer to monics, and two headed arrows ``$\twoheadrightarrow$" to quotient maps. Also, hooked arrows ``$\hookrightarrow$'' are canonical inclusions, which will be labeled $\iota_x$ when its codomain is $x$. To declutter the diagrams, only arrows that will eventually be referenced are given names. Spans $A \leftarrow B \to C$ are denoted by $B \from A \span C$ and cospans $X \to Y \leftarrow Z$ by $Y \from X \cospan Z$. This notation is vague, but context should dispense with any confusion. Note that, when first defining spans of cospans below, the object names might seem to be oddly chosen. The intention is to develop a consistency that will carry into the proof of the interchange law, at which point, the naming will seem more methodical.
%
%
%
%
%
%%%%%%%%%%%%%%%%%%%%%%%%%%%%%%%%%%%%%%%%%%%%%%%%%%%%%%%%%%%%%%%%%%%%%%
%
\subsection{Spans of cospans and their compositions} %      COMPOSITIONS
%
%%%%%%%%%%%%%%%%%%%%%%%%%%%%%%%%%%%%%%%%%%%%%%%%%%%%%%%%%%%%%%%%%%%%%

Suppose that we are working in a category $\cat{C}$. Given a parallel pair of cospans $L$, $S \from X \cospan Y$, a \emph{span of cospans} is a span $S' \from L \span S$ such that
\[
	\label{NoRef_SpanOfCospanDef}
	\diagram{NoRef_SpanOfCospanDef}
\]
commutes. Given spans of cospans $S'_1,S'_2 \from L \span S$, then a morphism of spans of cospans is a $\cat{C}$-morphism $S'_1 \to S'_2$ such that the diagram
\begin{equation} 
	\label{diag.2-cell iso}
	\diagram{Diag_2CellIso}
\end{equation}
commutes. This is an isomorphism of spans of  cospans exactly when the  $\cat{C}$-morphism is.

There are two different ways to turn compatible pairs of spans of cospans into a single span of cospans.  Actually, we are foreshadowing that spans of cospans will soon be $2$-cells in a bicategory. Instead of beating around the bush, we immediately name these assignments for what they are: vertical and horizontal composition. As we will see, $\cat{C}$ should have enough limits and colimits for the compositions to be defined. Also, for the present moment, compositions will only be defined up to isomorphism. We will also hold off on looking for the typical properties composition should satisfy until we introduce the bicategorical structure. 

Take a pair of spans of cospans $S' \from L \span S$ and $S'' \from S \span L'$. Define \textit{vertical composition} $\circ_v$ by 
\begin{equation}
\label{eq.VertComp}
	S'' \circ_v S' \coloneqq S' \times_S S'' \from L \span L'.
\end{equation}
Diagrammatically, this is
\[
	\label{NoRef_VertCompDef}
	\diagram{NoRef_VertCompDef}
\]
Now, let $L,S \from X \cospan Y$ and $R,T \from Y \cospan Z$ be cospans and let $S' \from L \span S$ and $T' \from R \span T$ be spans of cospans.  Define the assignment \textit{horizontal composition} $\circ_h$ by 
\begin{equation}
\label{eq.HorComp}
	T' \circ_h S' \coloneqq S' +_Y T'' \from L +_Y R \span S +_Y T,
\end{equation} 
which corresponds to 
\[
	\label{NoRef_HorCompDef}
	\diagram{NoRef_HorCompDef}
\]
At this point, it is natural to ask whether the interchange law holds between vertical and horizontal composition. It does, but not without some further assumptions. 
%
%
%
%
%
%%%%%%%%%%%%%%%%%%%%%%%%%%%%%%%%%%%%%%%%%%%%%%%%%%%%%%%%%%%%%%%%%%%%%
%
\subsection{The interchange law}  %                     INTERCHANGE LAW
%
%%%%%%%%%%%%%%%%%%%%%%%%%%%%%%%%%%%%%%%%%%%%%%%%%%%%%%%%%%%%%%%%%%%%%

Let $\cat{C}$ be a topos with chosen pushouts and let both legs of each span of cospans be monic. To see examples of where the interchange law fails without these assumptions, see Section \ref{sec.Disc on Assump}.

The first thing we want to do is to show that the vertical and horizontal compositions are well-defined, up to isomorphism. To this end, we give a lemma that will be put to work several times during the course of this section.
%
%
%
%
%
%
%-----------LEMMA--------------

\begin{lem} 
	\label{lem.helpful little lemma}
	Given a diagram
	\begin{equation} 
		\label{diag.helpful little lemma}
		\diagram{Diag_HelpfulLittleLemma}
	\end{equation}
	we get a pushout
	\begin{equation}
		\label{diag.helpful pushout}
		\diagram{Diag_HelpfulPushout}
	\end{equation}
	such that the canonical arrows $\gamma$ 
	and $\gamma'$ are monic.
\end{lem}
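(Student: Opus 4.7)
The plan is to exploit the fact that every topos is adhesive in the sense of Lack and Sobocinski, which means that the pushout of a monic along any morphism is again a monic and that the resulting square is simultaneously a pullback (a Van Kampen square). So once I recognize the pushout in (\ref{diag.helpful pushout}) as being a pushout whose input cospan consists of monics, the monicity of $\gamma$ and $\gamma'$ will be automatic.

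Concretely, I would first parse diagram (\ref{diag.helpful little lemma}) to isolate the cospan whose pushout appears in (\ref{diag.helpful pushout}), and verify that both of its legs are monic. In this paper's setting these monics are inherited from the standing assumption that every leg of a span of cospans is monic, possibly combined with the closure of monics under composition (and, where needed, stability of monics under pullback, which holds in any category). After confirming that the input to the pushout is indeed a cospan of monics, I construct the pushout using the cocompleteness of $\cat{C}$ and let $\gamma$ and $\gamma'$ denote its cocone legs.

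The conclusion then follows immediately: by adhesivity of $\cat{C}$, both cocone legs of a pushout of monics are themselves monic, which is precisely the claim about $\gamma$ and $\gamma'$.

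The main obstacle I anticipate is not the adhesivity invocation itself but the bookkeeping needed to confirm that the pushout in (\ref{diag.helpful pushout}) really arises from a cospan of monics drawn out of (\ref{diag.helpful little lemma}). If the relevant maps appear in the input diagram only as composites or via intermediate factorizations, I will need to decompose the diagram and check monicity of each piece before applying adhesivity; if several monics are glued across a shared object, I may have to invoke the pasting lemma for pushouts so that adhesivity can be applied stage by stage. Should even this fail to give a clean argument, I would fall back on the topos-theoretic characterization of monics via the subobject classifier, computing the kernel pair of $\gamma$ (resp.\ $\gamma'$) as a pullback and using the Van Kampen property to identify it with the diagonal.
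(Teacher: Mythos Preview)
Your plan misreads the roles of $\gamma$ and $\gamma'$ in the square~\eqref{diag.helpful pushout}. The square is the pushout of the span
\[
B+_A C \xleftarrow{\,q\,} B+C \xrightarrow{\,\gamma\,} B'+C',
\]
so $\gamma$ is one of the \emph{input} legs and $\gamma'$ is one of the \emph{cocone} legs (the other cocone leg is $q'$). The other input leg $q$ is the quotient $B+C \twoheadrightarrow B+_A C$, which is epic and generally not monic. Hence there is no way to present \eqref{diag.helpful pushout} as a pushout of a span both of whose legs are monic, and a single invocation of ``pushouts of monics have monic cocone legs'' cannot deliver both conclusions at once. In particular, you cannot ``let $\gamma$ and $\gamma'$ denote its cocone legs'': only $\gamma'$ is one.

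What the argument actually requires is three separate steps. First, prove directly that $\gamma = b+c \from B+C \to B'+C'$ is monic; the paper does this with exactly the staged-pasting idea you mention as a fallback, factoring $\gamma$ as $B+C \to B'+C \to B'+C'$ and recognizing each stage as a pushout along a coproduct inclusion, so adhesivity applies twice. Second, prove that the square \eqref{diag.helpful pushout} \emph{is} a pushout; this is not automatic from cocompleteness, since the corners are given a priori (as coequalizers of $A \rightrightarrows B+C$ and $A \rightrightarrows B'+C'$) and one must check the universal property by hand. Your proposal skips this entirely. Third, once the square is known to be a pushout and $\gamma$ is known to be monic, adhesivity gives that $\gamma'$ is monic. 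So your instinct to rely on adhesivity is correct, but it only handles the last of these three steps cleanly; the first two need the explicit work you relegated to a contingency.
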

\begin{proof}
	Using its universal property, we see that 
	$\gamma$ factors through $B'+C$ as seen in 
	diagram
	\[
		\label{NoRef_HelpfulLemmaProof1}
		\diagram{NoRef_HelpfulLemmaProof1}
	\]
	It is straightforward to check that the squares are both pushouts. By Lemma \ref{lem.adhesive properties}, we get that $\gamma$ must be monic and also that the monotonicity of $\gamma'$ will follow once \eqref{diag.helpful pushout} is shown to be a pushout.
	
	One can check that the right hand square commutes by using the universal property of $B+C$. To see that this square is a pushout, set up a cocone $D$
	\begin{equation} 
		\label{diag.helpful lemmma cocone}
		\diagram{Diag_HelpfulLemmmaCocone}
	\end{equation}
	Then $d'\iota_{B'}$, $d'\iota_{C'}$, and $D$ form a cocone under the span $B' \leftarrow A' \to C'$ on the bottom face of diagram \eqref{diag.helpful little lemma}. This induces the canonical map $\gamma'' \from B'+_AC' \to D$.  It follows that $d'\iota_{B'}=\gamma'' q' \iota_{B'}$ and $d'\iota_{C'}=\gamma'' q' \iota_{C'}$. Therefore, $d'=\gamma'' q'$ by the universal property of coproducts.
	
	Furthermore, $dq\iota_B$, $dq\iota_C$, and $D$ form a cocone under the span $B \leftarrow A \to C$ on the top face of diagram \eqref{diag.helpful little lemma}. Then $dq\iota_B = d'\gamma\iota_B = \gamma'' q'\gamma\iota_B = \gamma'' \gamma' q \iota_B$ and $dq\iota_C = d'\gamma\iota_C = \gamma'' q'\gamma\iota_C = \gamma'' \gamma' q \iota_C$ meaning that both $d$ and $\gamma''\gamma'$ satisfy the canonical map $B+_AC \to D$.  Hence $d=\gamma''\gamma'$. 
	
	The universality of $\gamma''$ with respect to diagram \eqref{diag.helpful lemmma cocone} follows from the universality of $\gamma''$ with respect to $B'+_AC'$.
\end{proof}
%
%
%
%
%
%
%-------LEMMA: COMPOSITION RESP. MONICS------

\begin{lem}
	Vertical and horizontal composition of 
	spans of cospans respects monics.
\end{lem}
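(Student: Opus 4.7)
The plan is to handle vertical and horizontal composition separately, with each case reducing to a standard fact about the topos $\cat{C}$ together with the machinery already in place.

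For the vertical case, take composable spans of cospans $S' \from L \span S$ and $S'' \from S \span L'$ with monic legs. The composition $S' \times_S S''$ has legs into $L$ and $L'$ obtained by composing the pullback projections with the monic legs $S' \rightarrowtail L$ and $S'' \rightarrowtail L'$. The projection $S' \times_S S'' \to S'$ is the pullback of the monic $S'' \rightarrowtail S$ along the map $S' \to S$, and since monics are stable under pullback in a topos, it is itself monic. Composing with $S' \rightarrowtail L$ then gives a monic leg, and the other leg is symmetric. This disposes of the vertical case in a few lines.

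For the horizontal case, take $S' \from L \span S$ and $T' \from R \span T$ with monic legs, where $L,S \from X \cospan Y$ and $R,T \from Y \cospan Z$. The horizontal composition is $S' +_Y T' \from L +_Y R \span S +_Y T$, and the task is to check that the two induced arrows between pushouts are monic. This is precisely the setup Lemma \ref{lem.helpful little lemma} was designed to address. I would apply it to the diagram whose top span is $S' \leftarrow Y \to T'$, whose bottom span is $L \leftarrow Y \to R$, and whose connecting vertical arrows are $\id_Y$ together with the given monics $S' \rightarrowtail L$ and $T' \rightarrowtail R$. The requisite compatibility squares commute by the defining commutativity of a span of cospans, so the lemma delivers the monicity of the canonical arrow $S' +_Y T' \to L +_Y R$. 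The other leg $S' +_Y T' \to S +_Y T$ follows by the same argument with $L$ and $R$ replaced by $S$ and $T$.

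There is no serious obstacle here beyond unpacking notation. The only point requiring any attention is verifying that the cube fed into Lemma \ref{lem.helpful little lemma} commutes; this is immediate from the definition of a span of cospans, since the outgoing maps from $Y$ into $L$ and $R$ already equal the composites through $S'$ and $T'$. Neither the topos structure nor adhesivity needs to be invoked directly beyond what is already packaged into the preceding lemma and the routine stability of monics under pullback.
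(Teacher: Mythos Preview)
Your proposal is correct and follows the same strategy as the paper: pullback stability of monics handles the vertical case, and two applications of Lemma~\ref{lem.helpful little lemma} (with $A=A'=Y$ and the monic legs of the spans of cospans as the vertical arrows) handle the horizontal case. Your write-up is more explicit about the diagrams and the commutativity check, but the underlying argument is identical.
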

\begin{proof}
	The result for vertical composition 
	follows from the fact that pullbacks 
	respect monics. The result for horizontal 
	composition follows from applying Lemma 
	\ref{lem.helpful little lemma} to the 
	diagrams
	\[
	\diagram{NoRef_CompRespectMonics1}
	\quad
	\diagram{NoRef_CompRespectMonics2}
	\qedhere
	\]
\end{proof}

The interchange law requires that, given composable spans of cospans
\begin{equation}
	\label{diag.2cells interchanged}
	\diagram{Diag_2CellsInterchanged}
\end{equation}
there is an isomorphism:
\begin{equation} 
\label{eq.interchange equation}
\left( S' \circ_\t{v} S'' \right) \circ_\t{h} 
\left( T' \circ_\t{v} T'' \right) \cong
\left( S' \circ_\t{h} T' \right) \circ_\t{v} 
\left( S'' \circ_\t{h} T'' \right).
\end{equation}
The left hand side corresponds to first applying vertical composition then horizontal composition.  The right hand side swaps the order of composition. This isomorphism will later strengthen to an equality when isomorphism classes of spans of cospans are the $2$-cells of a bicategory.

It is straightforward, using \eqref{eq.VertComp} and \eqref{eq.HorComp}, to see that \eqref{eq.interchange equation} reduces to finding an isomorphism
\begin{equation}
\label{eq.interchange simplified}
	(S' \times_S S'') +_Y (T' \times_T T'')
	\cong
	(S' +_Y T') \times_{S+_YT} (S'' +_Y T'')
\end{equation}
of spans of cospans.

%
%
%
%
%
%---------INTERCHANGE ISOMORPHISM-----------

To simplify our notation, write:
\begin{gather*}
A \coloneqq (S' \times_SS'') + (T' \times_TT''), \quad 
A_Y \coloneqq (S' \times_SS'') +_Y (T' \times_TT''),\\
B \coloneqq (S'+T') \times_{S+T} (S''+T''),  \quad  
B_Y \coloneqq (S'+_YT') \times_{S+_YT} (S''+_YT''). 
\end{gather*}
Now, apply Lemma \ref{lem.helpful little lemma} to the diagram
\[
	\label{NoRef_GettingAMaps}
	\diagram{NoRef_GettingAMaps}
\]
to get the pushout
\[
	\label{NoRef_AMaps}
	\diagram{NoRef_AMaps}
\]
Similarly, we get pushouts
\[
	\diagram{NoRef_APrimeMaps}
	\quad
	\diagram{NoRef_APrimePrimeMaps}
\]
Now, $A$  forms a cone over the cospan $S+T \from S'+T' \cospan S'' + T''$ via the maps $a$, $a'$, and $a''$. And so, we get a canonical map $\theta \from A \to B$.  
%
%
%
%
%
%
%-----------LEMMA--------------

\begin{lem}
	\label{lem:pullback over subobject}
	Given cospans $Y$, $W \from X \cospan Z$ where the legs of $W$ factor through a monic $Y \rightarrowtail W$, then there is a unique isomorphism $X \times_Y Z \cong X \times_W Z$. 
\end{lem}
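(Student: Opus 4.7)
The plan is to observe that $X \times_Y Z$ already satisfies the defining universal property of $X \times_W Z$; uniqueness of the isomorphism is then automatic from the fact that limits are unique up to unique isomorphism.

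Write $f \from X \to Y$ and $g \from Z \to Y$ for the legs of the cospan with apex $Y$, and let $m \from Y \rightarrowtail W$ denote the given monic, so that the legs of the cospan with apex $W$ are $m f$ and $m g$. The key step is the observation that, because $m$ is monic, for any test object $T$ and any pair of morphisms $p \from T \to X$ and $q \from T \to Z$, the equation $m f p = m g q$ holds if and only if $f p = g q$. Thus cones over the cospan $X \to W \leftarrow Z$ are literally the same data as cones over the cospan $X \to Y \leftarrow Z$.

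From this it follows immediately that $X \times_Y Z$, equipped with its canonical projections to $X$ and $Z$, also serves as a limit cone over the cospan with apex $W$, and so is canonically isomorphic to $X \times_W Z$; and this isomorphism is unique, since pullback cones are determined up to unique isomorphism. There is no real obstacle in the argument: everything hinges on the single remark that monics are left-cancellable, which is precisely where the hypothesis on $m$ is used, and no further features of $\cat{C}$ (not even the topos structure) are needed.
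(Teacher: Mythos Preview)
Your argument is correct and essentially coincides with the paper's: both hinge on the observation that monicity of $Y \rightarrowtail W$ makes cones over the two cospans the same data, so each pullback serves as a cone over the other's cospan and universality yields the unique isomorphism. The only cosmetic difference is that the paper phrases this as constructing maps in both directions and invoking universality to see they are mutual inverses, whereas you show directly that $X \times_Y Z$ already satisfies the universal property of $X \times_W Z$.
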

\begin{proof}
	Via the projection maps, $X \times_Y Z$ forms a cone over the cospan $W \from X \cospan Z$ and, also, $X \times_W Z$ forms a cone over the cospan $Y \from X \cospan Z$, though the latter requires the monic $Y \rightarrowtail W$ to do so. Universality implies that the induced maps are mutual inverses and they are the only such pair.  
\end{proof}
%
%
%
%
%
%
%--------------LEMMA-----------------

\begin{lem}
\label{lem.Theta Iso}
	The map $\theta \from A \to B$ is an isomorphism.
\end{lem}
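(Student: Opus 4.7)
The plan is to exploit extensivity of the topos $\cat{C}$: coproducts in $\cat{C}$ are disjoint and universal (i.e., stable under pullback). This is precisely the feature that makes coproducts and pullbacks commute in the way required here, and I expect the proof to invoke each of these two properties once.

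First, I would use disjointness of $S+T$ to compute certain fibres. Since $S,T \hookrightarrow S+T$ have trivial intersection, the composite $T' \to T \to S+T$ pulls back trivially along $\iota_S$, and therefore $(S'+T') \times_{S+T} S \cong S'$. Symmetrically $(S'+T') \times_{S+T} T \cong T'$, and the analogous statements hold with $S''+T''$ in place of $S'+T'$.

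Second, I would apply universality of coproducts to the structure map $B \to S+T$, giving a decomposition $B \cong (B \times_{S+T} S) + (B \times_{S+T} T)$. Applying the pullback pasting lemma together with the fibre computations above yields
\[
	B \times_{S+T} S \cong S' \times_S S'' \qquad \text{and} \qquad B \times_{S+T} T \cong T' \times_T T''.
\]
Summing these produces an isomorphism $B \cong A$. By construction it commutes with the projections to $S'+T'$ and $S''+T''$, so it is the inverse of the canonical map $\theta$.

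The main obstacle I anticipate is bookkeeping rather than anything conceptually hard: I must thread the extensivity decomposition through the pasted pullbacks carefully enough to exhibit the resulting isomorphism as precisely $\theta^{-1}$, not merely some abstract isomorphism $A \cong B$. Notably, the monic-leg hypothesis plays no role in this particular lemma; it will instead enter when upgrading the argument to the $Y$-coequalised objects $A_Y, B_Y$ appearing in the full interchange law, at which point Lemma \ref{lem:pullback over subobject} and Lemma \ref{lem.helpful little lemma} should become the main tools.
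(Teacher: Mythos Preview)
Your proof is correct and is essentially the paper's own extensivity argument: the paper distributes both coproduct factors over the pullback to obtain four summands, kills the two cross-terms via disjointness, and then invokes Lemma~\ref{lem:pullback over subobject} (for the monic coproduct injection $S \rightarrowtail S+T$) to identify the survivors with $S'\times_S S''$ and $T'\times_T T''$, whereas you pull back along $B \to S+T$ to land on two summands directly---a purely organisational difference. One correction to your closing remark: in the paper Lemma~\ref{lem:pullback over subobject} is consumed \emph{here}, not deferred to the $A_Y,B_Y$ step; the proof that $\theta_Y$ is an isomorphism instead uses Lemma~\ref{lem.helpful little lemma} together with the adhesive lemmas~\ref{lem.adhesive properties} and~\ref{lem.vk dual} and a monic--epic argument, and it is there that the monic-leg hypothesis genuinely enters.
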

\begin{proof}
	Because colimits are stable under pullback \cite[Thm.~4.7.2]{MacLaneMoerdijk_SheavesGeomLogic}, we get an isomorphism
	\[
	\gamma \from (S'\times_{S+T}S'') +(S'\times_{S+T}T'') +(T'\times_{S+T}S'') +(T'\times_{S+T}T'') \to B.
	\]
	But $S'\times_{S+T}T''$ and $S''\times_{S+T}T'$ are initial. To see this, recall that in a topos, all maps to the initial object are isomorphisms. Now, consider the diagram
	\[
		\label{NoRef_ShowingInitialObject}
		\diagram{NoRef_ShowingInitialObject}
	\]
	whose lower right square is a pullback because coproducts are disjoint in topoi.  Similarly, $T'\times_{S+T}S''$ is initial.  Hence we get a canonical isomorphism
	\begin{equation} \label{eq:B second iso}
	\gamma' \from (S'\times_{S+T}S'')+(T'\times_{S+T}T'') \to B
	\end{equation}
	that factors through $\gamma$. But Lemma \ref{lem:pullback over subobject} 
	gives unique isomorphisms $S' \times_{S} S'' \cong S' \times_{S+T} S''$ and $T'\times_{T} T'' \cong T' \times_{S+T} T''$. This produces a canonical isomorphism 
	\[
	\gamma'' \from A \to (S'\times_{S+T}S'')+(T'\times_{S+T}T'').
	\]
	One can show that $\theta = \gamma' \circ \gamma''$ using universal properties.  
\end{proof}
%
%
%
%
%
%
%---------GET THETA-Y ----------
Now, let us consider the following diagram:
\begin{equation}
	\label{diag.the big cube}
	\diagram{Diag_BigCube}
\end{equation}
where $\theta_Y$ and $\psi$ are the canonical maps. Observe that $\psi$ factors through $\theta_Y$ in the above diagram.  This follows from the universal property of pullbacks. We also have that the top square is a pullback from the previous lemma.
%
%
%
%
%
%
%----------LEMMA---------------------

\begin{lem} 
	\label{lem.theta_Y iso}
	The map $\theta_Y \from A_Y \to B_Y$ is an isomorphism.
\end{lem}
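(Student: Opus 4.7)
The plan is to build on the isomorphism $\theta : A \to B$ from Lemma \ref{lem.Theta Iso} and promote it through the pushouts over $Y$. First, I would make explicit the presentations of $A_Y$ and $B_Y$ as pushouts. By construction, $A_Y = (S' \times_S S'') +_Y (T' \times_T T'')$ is the pushout that identifies the two canonical copies of $Y$ inside $A$, namely the images of the $Y$-leg of $S'$ (which factors through both $S' \times_S S''$ and into the first summand of $A$) and the $Y$-leg of $T'$ (factoring into the second summand). Lemma \ref{lem.helpful little lemma} applied to the cube whose top square exhibits $A$ as a coproduct and whose vertical arrows come from $Y$ will both confirm this pushout and, importantly, show that the induced map $A \to A_Y$ is monic.

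Second, I would show that $B_Y$ admits an analogous pushout presentation. The top square in \eqref{diag.the big cube} is a pullback (this is exactly the statement of Lemma \ref{lem.Theta Iso} upgraded to say that $B$ is the pullback of $S'+T' \to S+T \leftarrow S''+T''$), and the back face is the pushout defining $S +_Y T$. Because colimits are stable under pullback in a topos and coproducts in a topos are disjoint, the same decomposition that powered Lemma \ref{lem.Theta Iso} shows that pulling back the $Y$-pushout through the cube produces $B_Y$ as the pushout of $B$ along the two natural maps $Y \to S' \times_{S} S'' \hookrightarrow B$ and $Y \to T' \times_{T} T'' \hookrightarrow B$ (using Lemma \ref{lem:pullback over subobject} to identify the nontrivial pullback pieces with the $Y$-indexed ones). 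The factorization of $\psi$ through $\theta_Y$ noted just before the lemma is exactly the commutativity needed to see that $\theta$ transports the two $Y$-maps on $A$ to the two $Y$-maps on $B$.

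Once both $A_Y$ and $B_Y$ are exhibited as pushouts of the common cospan $A \leftarrow Y + Y \to A$ (transported via $\theta$), the desired conclusion follows formally: the pushout is a functor, and applying it to the isomorphism $\theta : A \to B$ yields an isomorphism between the two pushouts, which is exactly $\theta_Y$.

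The main obstacle will be the second step, namely verifying that $B_Y$ really is the pushout one expects it to be. Unlike the identification in Lemma \ref{lem.Theta Iso}, where the cross terms $S' \times_{S+T} T''$ were initial by disjointness of coproducts, here $S +_Y T$ has $S \cap T = Y$ as a genuine common subobject, so the analogous cross-pullbacks $S' \times_{S +_Y T} T''$ are not initial but collapse to $Y$-shaped pieces. Handling these carefully — again using Lemma \ref{lem:pullback over subobject} together with colimit stability — is where the real work lies, and it is the reason the topos hypothesis (together with monic legs) is indispensable.
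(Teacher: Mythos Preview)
Your strategy---transport the pushout description of $A_Y$ through the isomorphism $\theta$ to obtain a matching pushout description of $B_Y$---is sound in principle and genuinely different from the paper's, but your write-up has one error and one real gap.

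The error: the map $A \to A_Y$ is not monic. It is the quotient $(S'\times_S S'') + (T'\times_T T'') \twoheadrightarrow (S'\times_S S'') +_Y (T'\times_T T'')$, a regular epimorphism. Lemma~\ref{lem.helpful little lemma} does not apply in the configuration you describe (its apex is the same on top and bottom), and in any case your strategy never uses this monicity, so the claim should simply be dropped.

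The gap: step two---exhibiting $B_Y$ as the coequalizer of $Y \rightrightarrows B$---is only gestured at. It \emph{can} be carried out: iterate colimit stability by writing $S'+_Y T'$ as a pushout over $Y$, pull back along $(S''+_Y T'') \to (S+_Y T)$, then repeat on each summand. Using Lemma~\ref{lem:pullback over subobject} together with the fact that the pushout square for $S+_Y T$ is also a pullback (Lemma~\ref{lem.adhesive properties}), the four pieces become $S'\times_S S''$, $Y$, $Y$, $T'\times_T T''$, glued so that $B_Y \cong A_Y$. But this is a nontrivial nested computation, and you must still verify that the composite isomorphism is $\theta_Y$; none of this is in your proposal beyond naming the obstacle.

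For contrast, the paper bypasses all of this: it shows $\theta_Y$ is monic (it is the first factor of the monic $a'_Y = p\,\theta_Y$) and epic (since $\psi$ is, by applying Lemma~\ref{lem.vk dual} to the cube \eqref{diag.the big cube} to make the left faces pushouts, hence pullbacks along monos by Lemma~\ref{lem.adhesive properties}, so $\psi$ is a pullback of a regular epi). In a topos monic plus epic gives an isomorphism, and the argument is a few lines. Your route trades that structural argument for an explicit decomposition; it is more concrete but substantially longer, and as written it is incomplete.
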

\begin{proof}
	Because we are working in a topos, it suffices to show that $\theta_Y$ is both monic and epic. It is monic because $a'_Y$ is monic.
	
	To see that $\theta_Y$ is epic, it suffices to show that $\psi$ is epic. The front and rear right faces of \eqref{diag.the big cube} are pushouts by Lemma \ref{lem.helpful little lemma}.  Then because the top and bottom squares of \eqref{diag.the big cube} are pullbacks consisting of only monomorphisms, Lemma \ref{lem.vk dual} implies that the front and rear left faces are pushouts.  However, as pushouts over monos, Lemma \ref{lem.adhesive properties} tells us they are pullbacks.  But in a topos, regular epis are stable under pullback, and so $\psi$ is epic.  	
\end{proof}
%
%
%
%
%
%
%---------ISO OF 2CELLS ----------------

It remains to show that $\theta_Y$ serves as 
an isomorphism between spans of cospans. This 
amounts to showing that
\begin{equation} 
	\label{diag.theta 2-cell iso}
	\diagram{Diag_Theta2CellIso}
\end{equation}
commutes. Here $g$ and $k$ are induced from applying vertical composition before horizontal, $h$ from applying horizontal composition before vertical, $j$ is from composing in either order, $f$ is from \eqref{eq.HorComp}, and $p$ is from \eqref{diag.the big cube}.  The top and bottom face commute by construction.
%
%
%
%
%
%
%-----------LEMMA----------------------

\begin{lem}
	The inner triangles of diagram \eqref{diag.theta 2-cell iso} commute. That is, we have $k=fp \theta_Y$ and $h=\theta_Yg$.
\end{lem}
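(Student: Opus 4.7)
The plan is to verify each of the two identities using the universal properties of the pushout $A_Y = (S'\times_SS'') +_Y (T'\times_TT'')$ and the pullback $B_Y = (S'+_YT') \times_{S+_YT} (S''+_YT'')$. In both cases the top and bottom faces of \eqref{diag.theta 2-cell iso} already commute by construction, so each triangle reduces to a check after composing with the appropriate projections or injections, and the identities then follow from the definition of $\theta_Y$.

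To prove $h = \theta_Y g$, note that both composites are morphisms into $B_Y$. By the universal property of the pullback, it suffices to verify equality after post-composing with each pullback projection $B_Y \to S'+_YT'$ and $B_Y \to S''+_YT''$. From the construction of $\theta_Y$ as the canonical map induced by the cone $a, a', a''$ on $A$ (and then pushed out to $A_Y$ via Lemma \ref{lem.helpful little lemma}), post-composing $\theta_Y$ with each pullback projection recovers exactly the horizontal-composition data on the corresponding piece. Pre-composing further with $g$ and with $h$ then yields the same map, by the commutativity of the top and bottom faces of \eqref{diag.theta 2-cell iso}.

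To prove $k = fp\theta_Y$, both sides are morphisms out of $A_Y$. By the universal property of the pushout, it suffices to verify equality after pre-composing with each coproduct injection $S' \times_S S'' \to A_Y$ and $T' \times_T T'' \to A_Y$ (and, implicitly, with the map from $Y$, on which both composites agree trivially). On each summand, the restriction of $k$ is by definition a component of the horizontal-composition leg of the vertical composite, while the restriction of $fp\theta_Y$ unfolds, via the factorization of $\theta_Y$ through the pullbacks appearing in Lemma \ref{lem.Theta Iso} and the pullback projection $p$, into the same map: first project to $S'$ or $T'$ and then apply the horizontal composition leg $f$.

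The main obstacle is purely notational bookkeeping: diagram \eqref{diag.theta 2-cell iso} involves many named canonical maps, and on each summand or projection one must identify precisely which cocone or cone datum is being compared against. Once those components are matched, neither identity requires any content beyond the defining universal properties of $A_Y$, $B_Y$, and $\theta_Y$.
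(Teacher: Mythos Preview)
Your strategy is sound, and both identities can indeed be verified through the universal properties you invoke. The paper's proof, however, is more economical on the second identity. It first establishes $k = fp\theta_Y$ by a pushout argument close to yours --- showing $k = f a'_Y$ via the universal property of the pushout defining $f$, and then reading off $a'_Y = p\theta_Y$ directly from diagram \eqref{diag.the big cube}. It then obtains $h = \theta_Y g$ in one line: since the top and bottom faces of \eqref{diag.theta 2-cell iso} already give $fph = j = kg$, substituting $k = fp\theta_Y$ yields $fph = fp\theta_Y g$, and cancelling the monic $fp$ finishes. This exploits the hypothesis that the span legs are monic and avoids a second pass through the definitions.

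One caution about your argument for $h = \theta_Y g$: after post-composing with a pullback projection of $B_Y$, you justify the resulting equality by citing ``the commutativity of the top and bottom faces of \eqref{diag.theta 2-cell iso}.'' Those faces concern the maps into $L+_Y R$ and $L'+_Y R'$, not into $S'+_Y T'$ or $S''+_Y T''$, so they do not directly give $p\theta_Y g = ph$. That equality does hold, but it requires unwinding the constructions of $g$, $h$, and $a'_Y$ separately rather than appealing to those faces. The paper's monic-cancellation trick is designed precisely to sidestep this extra bookkeeping.
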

\begin{proof}
	To see that $k=fp\theta_Y$, consider the diagram
	\[
		\label{NoRef_2CellIso}
		\diagram{NoRef_2CellIso}
	\]
	The bottom face is exactly the pushout diagram from which $f$ was obtained.  Universality implies that $k = f a'_Y$ and, as seen in \eqref{diag.the big cube}, $a'_Y = p \theta_Y$. 
	
	That $h=\theta_Yg$ follows from 
	\[
		fph=j=kg=fp\theta_Yg
	\] 
	and the fact that $fp$ is monic.
\end{proof}

%
%
%
%
%
%------------------------INTERCHANGE LAW -----------------------------------

Of course, we have only shown that two of the four inner triangles commute, but we can replicate our arguments to show the remaining two commute as well.  This lemma was the last step in proving the following interchange law.

\begin{thm}
	\label{thm.interchange law}
	Given diagram \eqref{diag.2cells interchanged} in a topos, there is a canonical isomorphism $(S' \times_S S'') +_Y (T' \times_T T'') \cong (S' +_Y T') \times_{S +_Y T} ( S'' +_Y T'')$.
\end{thm}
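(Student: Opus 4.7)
The plan is to recognize that the heavy lifting has already been done by the preceding lemmas, so the proof of the theorem is really an assembly. I begin by unwinding the abbreviations introduced just before Lemma \ref{lem.theta_Y iso}: the two objects in the statement are exactly
\[
A_Y = (S' \times_S S'') +_Y (T' \times_T T''), \qquad B_Y = (S' +_Y T') \times_{S +_Y T} (S'' +_Y T''),
\]
and the canonical map between them is the map $\theta_Y \from A_Y \to B_Y$ built in diagram \eqref{diag.the big cube} via the universal property of the pullback $B_Y$. So the theorem amounts to the assertion that $\theta_Y$ is an isomorphism of spans of cospans.

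First, Lemma \ref{lem.theta_Y iso} gives that $\theta_Y$ is an isomorphism in the ambient topos $\cat{C}$. To upgrade this to an isomorphism of spans of cospans, as required by \eqref{eq.interchange equation}, I must check compatibility of $\theta_Y$ with all of the structure maps on the two sides, i.e.\ that diagram \eqref{diag.theta 2-cell iso} commutes. The top and bottom faces commute by the very construction of the horizontal and vertical composites in \eqref{eq.VertComp} and \eqref{eq.HorComp}.

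Next, the lemma immediately preceding the theorem handles two of the four inner triangles of \eqref{diag.theta 2-cell iso}, namely $k = fp\theta_Y$ and $h = \theta_Y g$. The remaining two triangles, which concern the structure maps associated with the other projection of the pullback $B_Y$ and the other coproduct injection into $A_Y$, follow by the same argument with the roles of the two factor pairs $(S',S'')$ and $(T',T'')$ swapped. Putting these together gives the full commutativity of \eqref{diag.theta 2-cell iso}, so $\theta_Y$ is a morphism, and hence an isomorphism, of spans of cospans.

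The genuine obstacles have already been paid for upstream: constructing $\theta_Y$ and proving its bijectivity via the stability of colimits under pullback and the disjointness of coproducts in a topos (Lemma \ref{lem.Theta Iso}), together with the van Kampen style argument that the canonical map $\psi$ is epic (Lemma \ref{lem.theta_Y iso}). At the level of the theorem itself there is no further obstacle, only the bookkeeping above.
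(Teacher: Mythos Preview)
Your proposal is correct and mirrors the paper's approach exactly: the paper, too, treats the theorem as an assembly of Lemmas~\ref{lem.Theta Iso} and~\ref{lem.theta_Y iso} together with the unnamed lemma showing two of the inner triangles of~\eqref{diag.theta 2-cell iso} commute, and then notes that the remaining two triangles follow by replicating the argument. One small imprecision: the symmetry that yields the remaining two triangles is not ``swapping $(S',S'')$ with $(T',T'')$'' but rather (i) swapping the upper and lower span legs (i.e.\ $S'\leftrightarrow S''$, $T'\leftrightarrow T''$, $L\leftrightarrow L'$, $R\leftrightarrow R'$) to get the analogue of $k=fp\theta_Y$ for $L'+_Y R'$, and (ii) swapping the left and right sides (i.e.\ $X\leftrightarrow Z$) to get the analogue of $h=\theta_Y g$; otherwise your summary is accurate.
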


%
%
%
%
%
%%%%%%%%%%%%%%%%%%%%%%%%%%%%%%%%%%%%%%%%%%%%%%%%%%%%%%%%%%%%%%%%%%
%
\subsection{Constructing the bicategory}  %            THE BICATEGORY
%
%%%%%%%%%%%%%%%%%%%%%%%%%%%%%%%%%%%%%%%%%%%%%%%%%%%%%%%%%%%%%%%%%%
%
%
%
%
%
%
%--------------COSPAN 2* C----------------

Let $\cat{C}$ be any topos. We will commence construction of a bicategory named $\cat{MonSp(Csp(C))}$, or $\csC$ for short. The $0$-cells of $\csC$ are just the $\cat{C}$-objects. For $0$-cells $X$ and $Y$, build a category $\csC(X,Y)$ whose objects are $\cat{C}$-cospans and morphisms are isomorphism classes of $\cat{C}$-spans of cospans whose legs are both monic. Composition in $\csC (X,Y)$ is the vertical composition $\circ_v$ introduced in \eqref{eq.VertComp}. It is straightforward to check that associativity holds and that spans of cospans whose legs are identity serve as identities.
%
%
%
%
%
%
%------------COMPOSITION FUNCTOR---------

The composition functor is given by an assignment
\[
\otimes \from \csC(Y,Z) \times \csC(X,Y) \to \csC(X,Z)
\]
that acts on $1$-cells by $(T,S) \mapsto S \otimes_Y T$ and on $2$-cells by horizontal composition $\circ_h$ from \eqref{eq.HorComp}. It is straightforward to check that $\otimes$ preserves identities. Theorem \ref{thm.interchange law} ensures that $\otimes$ preserves composition.
%
%
%
%
%
%------------IDENTITY & ASSOCIATOR % UNITOR FUNCTOR-----------

For every $0$-cell $X$, the identity functor $\cat{1} \to \csC (X,X)$ picks out the $2$-cell with all identity maps on $X$. The associator is made of $2$-cells 
\[
R+_XS+_YT \from (R+_XS)+_YT \span R+_X(S+_YT).
\] 
The right unitor is made of $2$-cells $S \from S+_YY \span S$. Likewise, the left unitor has $2$-cells $T \from T \span Y+_YT$. The legs for each of the above are the obvious choices. The pentagon and triangle identities follow from the associativity, up to isomorphism, of pushouts. 
%
%
%
%
%
%-----------MAIN THEOREM--------------

Given all of the data just laid out, we have the main theorem of the paper.

\begin{thm}
	If $\cat{C}$ is a topos, then $\cat{MonSp(Csp(C))}$ is a bicategory.
\end{thm}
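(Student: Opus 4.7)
The plan is to check the bicategory axioms for the data already assembled. The interchange law is supplied by Theorem \ref{thm.interchange law}, so what remains amounts to routine bookkeeping with pullbacks and pushouts, made strict by the convention that 2-cells are isomorphism classes of spans of cospans. I would proceed in three stages.

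First, I would verify that each hom-category $\csC(X,Y)$ is a category and that horizontal composition $\otimes$ is a functor. Vertical composition is associative because pullbacks are associative up to canonical isomorphism, which becomes an equality after passing to iso classes; the identity 2-cell on a cospan $S$ is the span $S \from S \span S$ with identity (hence monic) legs, and composing with it recovers any 2-cell up to iso. Functoriality of $\otimes$ decomposes into preservation of identity 2-cells --- a pushout of two identity spans is an identity span on the pushout cospan --- and preservation of composition, which is exactly Theorem \ref{thm.interchange law}.

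Second, I would exhibit the associator and the two unitors as natural isomorphisms. Their components, as listed in the paragraph preceding the theorem, have apices given by iterated pushouts and legs given by the canonical pushout isomorphisms; these legs are monic because they are iso, so the components live in $\csC$. Naturality in each slot follows from the universal property of pushouts applied to a map of pushout diagrams, while invertibility of each 2-cell component follows from the fact that a span whose legs are iso represents an invertible 2-cell under vertical composition. Third, I would check the pentagon and triangle identities; these reduce to the associativity and unit coherence for iterated pushouts in $\cat{C}$, which collapse to equalities once iso classes of spans of cospans are taken.

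The most delicate step is the naturality of the associator against a vertically composed pair of 2-cells, since this requires commuting an iterated pushout past a pullback --- essentially a repeat of the argument used in Lemma \ref{lem.Theta Iso}, where stability of colimits under pullback and disjointness of coproducts in a topos do all the real work. Once naturality is in place, no genuinely new hypothesis beyond those already invoked is required, and the coherence diagrams close up on the nose by the iso-class convention.
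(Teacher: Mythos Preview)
Your proposal is correct and follows essentially the same route as the paper: assemble the hom-categories, invoke Theorem~\ref{thm.interchange law} for functoriality of $\otimes$, exhibit the associator and unitors via pushout coherence, and discharge the pentagon and triangle identities by associativity of pushouts up to isomorphism. One minor correction to your final paragraph: the naturality of the associator is not delicate and requires no interchange-style argument, because the associator components have \emph{isomorphisms} for legs, so vertical composition with them merely reindexes along those isos, and naturality reduces to the ordinary naturality of the pushout-associativity isomorphism in $\cat{C}$ --- a pure colimit statement with no pullback in sight.
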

%
%
%
%
%
%
%
%%%%%%%%%%%%%%%%%%%%%%%%%%%%%%%%%%%%%%%%%%%%%%%%%%%%%%%%%%%%%%%%%%%%%%%%
%
\section{A discussion on the assumptions}  %           THE ASSUMPTIONS
\label{sec.Disc on Assump}
%
%%%%%%%%%%%%%%%%%%%%%%%%%%%%%%%%%%%%%%%%%%%%%%%%%%%%%%%%%%%%%%%%%%%%%

Can we expand the domain on which this construction works? Apart from ensuring sufficiently many limits and colimits, the primary source of roadblocks is the interchange law. As we discuss below, it is not absolutely necessary to work strictly within a topos, but we do so in order to be expeditious.  To lay out, one by one, the requirements for our interchange law to hold would be exhausting and leave us little energy to work through its proof. To do this is even less reasonable given that we can just shout ``topos'' and move on.   However, listing these requirements is interesting enough to take a look at here.  
%
%
%
%
%
%------------------NON-EXAMPLE  FOR MONIC LEGS-------------------------

Before digging deeper into the properties used, let's convince ourselves of the necessity for monic legs within our span of cospans.

\begin{ex}
	Consider the category $\cat{Set}$ of sets and functions. We will relax the assumption that the legs of the spans of cospans are both monic.  Indeed, suppose that $S'$, $S''$, and $T'$ are two element sets and $S$, $Y$, $T$, and $T''$ are singletons.  The functions can be any of the limited choices we have.  After several routine calculations, we determine that $(S' \times_S S'') +_Y (T' \times_T T'')$ has cardinality $5$ and $(S' +_Y T') \times_{S+_YT} (S''+_YT'')$ has cardinality $6$. 
\end{ex}

So we see that even in the archetypal topos, the legs of the spans must be monic for the interchange law to hold. But even if they are, this law may fail if our category $C$ is not a topos. The next example illustrates this.

\begin{ex}
	Consider the Boolean algebra on a two element set.  This is the category $0 \to 1$ with products given by meet and coproducts given by join. Note that this is not a topos. Indeed, the only non-identity morphism is both monic and epic but, as no inverse exists, it is not is isomorphism. Recalling the interchange equation \eqref{eq.interchange equation}, suppose that we have $Y=S''=T'=0$ and $S=S'=T=T''=1$.  It is straightforward to check that $(S' \times_S S'') +_Y (T' \times_T T'') = 0$ and $(S' +_Y T') \times_{S+_YT} (S''+_YT'')=1$. That is, the interchange equation does not hold.  Because this Boolean algebra can be embedded into any other, it follows that interchange does not hold for any Boolean algebra.
\end{ex}

Now that we are convinced that we actually do need to assume something for interchange to work, we can list our requirements.  The obvious place to start is by asking for our category to have enough limits and colimits.  

In Lemma \ref{lem.helpful little lemma}, we use that pushouts respect monics.  This occurs in topoi. Indeed, this occurs in adhesive categories which are discussed in Section \ref{sec.Rewriting}.  

In Lemma \ref{lem.Theta Iso}, we use a couple of facts.  First, we use that coproducts are disjoint, which means that a pullback over coproduct inclusions is initial. We also use that colimits -- particularly coproducts -- are stable under pullback.  The full force of a topos is not needed to satisfy this requirement.  Indeed, looking at \cite[Thm.~1.4.9]{MacLaneMoerdijk_SheavesGeomLogic}, we simply need our category to be locally cartesian closed.  

In Lemma \ref{lem.theta_Y iso}, we use that a monic epimorphism is an isomorphism in topoi. This is surely not true in general, but is is true that a monic regular epimorphism is always an isomorphism.  Notice that the vertically aligned epimorphisms in diagram \eqref{diag.the big cube} are all regular since they are all coequalizers. For instance $S+T \twoheadrightarrow S+_YT$ is a coequalizer over $Y \rightrightarrows S+T$.  So we can merely ask for pullbacks to preserve regular epimorphisms. Note that this does happen in a regular category, which is a larger class than topoi.   

Also in Lemma \ref{lem.theta_Y iso}, we make use of Lemmas \ref{lem.adhesive properties} and \ref{lem.vk dual}.  These holds in any adhesive category. 

It is clear that the properties of adhesive categories play an important role in ensuring that $\csC$ is a bicategory, as does being locally cartesian closed and regular.  Certainly, topoi are a well-known, large family of categories that are contained in the intersection of all of these classes. 
%
%
%
%
%
%
%
%
%
%
%
%
%
%
%
%
%
%
%%%%%%%%%%%%%%%%%%%%%%%%%%%%%%%%%%%%%%%%%%%%%%%%%%%%%%%%%%%%%%%%%%%%%%
%
\section{An Informal Introduction to Graph Rewriting}  %          GRAPH REWRITING
\label{sec.Rewriting}
%
%%%%%%%%%%%%%%%%%%%%%%%%%%%%%%%%%%%%%%%%%%%%%%%%%%%%%%%%%%%%%%%%%%%%%%%%%

For those who are not familiar with rewriting systems, and graph rewriting in particular, we provide a brief introduction. For a more in-depth and rigorous viewpoint, see \cite{Baader_TermRewritingAllThat}, \cite{Ehrig_GraphGramAlgAp}, or \cite{LackSoboc_AdhesiveCategories}.

There are many methods of rewriting found throughout mathematics, computer science, and linguistics. The general idea is that we begin with a collection of rules, a collection of terms and a way to apply rules to certain, compatible terms.  When applied to a term, a rule replaces a sub-term with a new sub-term. A simple, very informal example is found within the English language.  Consider a set of rules 
\[
\{ \t{(noun)} \mapsto x : x \t{ is an 
	English noun} \}.
\]
We can apply any one of these rules to
\[
\t{`The (noun) is behind you.'}
\] 
to obtain heaps of grammatically correct, if potentially spooky, sentences.  

The first rewriting methods were uni-dimensional, in that they are concerned with replacing a string of characters or letters. Many attempts to define multi-dimensional rewriting systems came up short in application and execution, but Ehrig, Pfender, and Scheider developed a categorical approach using graph morphisms and pushouts  \cite{Ehrig_GraphGramAlgAp} that has since been studied extensively. This approach came to be known as \textit{double pushout graph rewriting}. This is what we are interested in and will consider in our bicategory $\cat{Rewrite}$ introduced below.

Here is how double pushout graph rewriting works. A \emph{production} is a span 
\[
p: L \leftarrowtail K \rightarrowtail R
\] 
of graphs with monic legs. Some authors call this a `linear production' to distinguish from spans with potentially non-monic legs, but we will not adopt this convention here. Given a production $p$, and a graph morphism $L \to C$, called a \emph{matching map}, such that there exists a diagram
\begin{equation}
	\label{diag:derivation}
	\diagram{Diag_Derivation}
\end{equation}
consisting of two pushout squares, we say that that $D$ is a \emph{direct derivation} of $C$ and write $C \rightsquigarrow_E D$ or just $C \rightsquigarrow D$. It is common enough to decorate the arrow `$\rightsquigarrow$' with more information: for instance, the name of the production or the matching map.  But that will not be necessary here. Observe that the objects $E$ and $D$ need not exist, but when they do, they are unique up to isomorphism \cite[Lemma 4.5]{LackSoboc_AdhesiveCategories}. 

A \emph{grammar} $(\mathcal{G},\mathcal{P})$ is a set of graphs $\mathcal{G}$ paired with a 
set of productions $\mathcal{P}$. A \textit{derivation of the grammar} is a string of direct derivations 
\[
	G_0 \rightsquigarrow G_1 \rightsquigarrow \dotsm \rightsquigarrow G_n
\] 
from productions in $\mathcal{P}$ and with $G_0 \in \mathcal{G}$. We say that $G_n$ is a \emph{rewrite of $G_0$}. The \textit{language} $\mathcal{L}(\mathcal{G},\mathcal{P})$ generated by the grammar is the collection of all graphs $G$ such that there is a derivation $G_0 \rightsquigarrow^\ast G$ of the grammar. The idea is that one will study a language. Exactly what properties are interesting is beyond the scope of this discussion.  Interested readers should consult the references mentioned at the beginning of this section.  Instead of going deeper into the subject, we will briefly zoom out.
%
%
%
%
%
%
%-----------ADHESIVE BLURB--------------

Searching for a general framework for term graph rewriting, Lack and Sobocinski \cite{LackSoboc_AdhesiveCategories} introduced a class of categories they call \emph{adhesive}. Roughly, a category is adhesive if it has pullbacks, pushouts along monomorphisms, and certain exactness conditions between pullbacks and pushouts hold. This is not a trivial class of categories given that topoi are adhesive \cite{LackSoboc_ToposesAdhesive}. Because of this, we were able to use the following lemmas, which were proven for adhesive categories, in our construction. However, we will just present them for topoi.                                 
%
%
%
%
%
%
%----------LEMMA---------------

\begin{lem}[{\cite[Lemmas 
		4.2-3]{LackSoboc_AdhesiveCategories}}]
	\label{lem.adhesive properties}
	In a topos, monomorphisms are stable under 	pushout. Also, pushouts along monomorphisms 
	are pullbacks.
\end{lem}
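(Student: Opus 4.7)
The natural strategy is to import both assertions from the theory of adhesive categories. Lack and Sobocinski show in \cite{LackSoboc_ToposesAdhesive} that every topos is adhesive, and in that setting the statements appear as \cite[Lemmas 4.2 and 4.3]{LackSoboc_AdhesiveCategories}. Since the paper has already referenced both of these sources, the most economical proof is simply to invoke them; the stated reason for restricting attention to topoi at all is precisely to avoid unpacking the adhesive-category machinery here.

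If a self-contained, topos-theoretic argument is preferred, I would exploit the familiar exactness properties of a topos: coproducts are disjoint and stable under pullback, pullback functors preserve colimits (since topoi are locally cartesian closed), and any morphism that is simultaneously a monomorphism and a regular epimorphism is an isomorphism. Given $m \from A \rightarrowtail B$ and $f \from A \to C$, I would realize the pushout apex $D$ as the effective quotient of $B + C$ by the equivalence relation generated by the pairs $\bigl(m(a), f(a)\bigr)$. Because $m$ is monic and coproducts are disjoint, the generated relation is already an equivalence relation, so the quotient map $B + C \twoheadrightarrow D$ is a regular epimorphism whose fibers admit an explicit description.

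From this description, monicity of the parallel arrow $n \from C \to D$ can be checked by verifying that distinct generalized elements of $C$ land in distinct equivalence classes, which follows from the disjointness of coproducts together with the injectivity of $m$. For the second assertion, I would construct the canonical comparison map $C \times_D B \to A$ and show it is an isomorphism, again using stability of coproducts under pullback to decompose $C \times_D B$ along the partition of $D$ induced by $B + C \twoheadrightarrow D$. The main obstacle is purely notational bookkeeping; the substantive content is supplied by the exactness properties we have already assumed. Since those exactness properties are exactly what adhesive categories isolate, the honest recommendation is to cite \cite{LackSoboc_AdhesiveCategories} and move on.
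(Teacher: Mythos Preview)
Your proposal is correct and matches the paper's approach exactly: the paper gives no proof at all, simply stating the lemma with the citation to \cite[Lemmas 4.2--3]{LackSoboc_AdhesiveCategories} and noting earlier that topoi are adhesive \cite{LackSoboc_ToposesAdhesive}. Your additional self-contained sketch goes beyond what the paper provides but is not needed.
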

%
%
%
%
%
%
%----------LEMMA---------------

\begin{lem}[{\cite[Lemma 
		6.3]{LackSoboc_AdhesiveCategories}}] 
	\label{lem.vk dual}
	In a topos, consider a cube
	\[
		\label{NoRef_AdhsiveDualCube}
		\diagram{NoRef_AdhsiveDualCube}
	\]
	whose top and bottom faces consist of only monomorphisms. If the top face is a pullback 
	and the front faces are pushouts, then the bottom face is a pullback if and only if the 
	back faces are pushouts.
\end{lem}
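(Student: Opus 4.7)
The plan is to reduce this to the van Kampen property of pushouts along monomorphisms, which holds in every adhesive category and hence, by a result of Lack--Soboci\'nski, in every topos. Recall that the van Kampen biconditional says: given a cube whose bottom face is a pushout along a monomorphism, if the two ``back'' vertical faces containing the edge from the span source of that pushout to its image in the top are pullbacks, then the top face is a pushout if and only if the two ``front'' vertical faces are pullbacks.

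I would begin by observing that, by Lemma \ref{lem.adhesive properties}, each of the two front pushout faces of the given cube is simultaneously a pullback, since it is a pushout along a monic edge of the top face. This propagates enough pullback data around the cube to feed into the van Kampen biconditional in the next step.

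The core step is to reorient the cube so that a chosen front face plays the role of the van Kampen bottom, its opposite plays the role of the van Kampen top, and the four remaining faces of the original cube become the four vertical faces of the van Kampen cube. Among these four vertical faces, the original top face (a pullback by hypothesis) and the other front face (a pullback by the previous step) provide the required pullback data adjacent to the span source; this leaves the original bottom face and one of the original back faces as the remaining two vertical faces of the van Kampen cube. The van Kampen biconditional then yields: the chosen back face is a pushout if and only if the original bottom face and the other back face are both pullbacks. Running the same argument with the other front face gives the symmetric biconditional. Combining the two, and using once more that a pushout along a monomorphism is a pullback, delivers the desired equivalence between ``both back faces are pushouts'' and ``bottom is a pullback.''

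The principal obstacle is the combinatorial bookkeeping of the cube under each reorientation: one must identify, for every application of van Kampen, which vertex plays the role of the span source of the relevant pushout, and therefore which adjacent pair of faces must be pullbacks for the biconditional to fire. A secondary subtlety, needed to close the loop in the reverse direction, is to chain the two biconditionals obtained from reorienting about each front face together with the fact that pushouts along monos are pullbacks; once the bookkeeping is in place, this diagram chase is short.
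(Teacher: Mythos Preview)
The paper does not supply its own proof of this lemma; it is quoted from Lack--Soboci\'nski with a bare citation, so there is nothing in-paper to compare against. Your overall plan---rotate the cube so that one of the given pushout faces becomes the base of a van Kampen cube---is exactly the mechanism behind Lack--Soboci\'nski's argument, and the implication ``back faces pushouts $\Rightarrow$ bottom pullback'' does go through along these lines.

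There is, however, a genuine gap in the converse direction. First, your bookkeeping after the reorientation is off: when a front face is taken as the van Kampen base, its span source lies on the original top face and on one of the original \emph{back} faces, not on the other front face (the two front faces of the cube meet along the pushout-\emph{target} edge, not at their span sources). Hence the two pullback hypotheses needed to apply van Kampen are the original top (fine) and one original back face---precisely a face whose status is in question. In the direction ``bottom pullback $\Rightarrow$ back faces pushouts'' you therefore cannot invoke van Kampen without already assuming part of the conclusion. Second, even granting your two biconditionals, the chaining you propose is circular: from ``back$_1$ pushout $\Leftrightarrow$ back$_2$ pullback,'' ``back$_2$ pushout $\Leftrightarrow$ back$_1$ pullback,'' and ``pushout along a mono $\Rightarrow$ pullback,'' nothing forces either back face to be a pushout---the scenario in which neither is a pushout and neither is a pullback is consistent with all three statements. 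Closing this direction requires an extra step beyond what you outline: one constructs the pushouts the back faces \emph{ought} to be, builds a comparison cube to which the already-established implication applies, and then argues that the comparison maps are isomorphisms.
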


A good portion of the theory for double pushout graph rewriting has been extended to adhesive categories. So, while our focus is and will be on double pushout graph rewriting, there may be variations of the bicatgory $\cat{Rewrite}$ (introduced below) that are of interest to computer scientists.  For instance, the \emph{Schanuel topos} was used to model the $\pi$-calculus \cite{Fiore_OpModelsProcessCalulii} and adhesive categories allow  us to extend rewriting to such settings.  Our contribution is a bicategorical framework to house the rewriting as $2$-cells, though only for double pushout graph rewriting.

%%%%%%%%%%%%%%%%%%%%%%%%%%%%%%%%%%%%%%%%%%%%%%%%%%%%%%%%%%%%%%%%%%%%%%%%%
\subsection{$\cat{Rewrite}$}  %                REWRITE
\label{sec.Rewrite}
%%%%%%%%%%%%%%%%%%%%%%%%%%%%%%%%%%%%%%%%%%%%%%%%%%%%%%%%%%%%%%%%%%%%%%%%%

Here, we introduce the bicategory $\cat{Rewrite}$ as promised.  To prepare, we begin by introducing a slight generalization of the double pushout graph rewriting concepts discussed above.  
An \emph{interface} $(I,O)$ is a pair of discrete graphs and a \emph{production with interface} $(I,O)$, or simply \emph{$(I,O)$-production}, is a cospan of spans
\[
	\label{NoRef_IOProduction}
	\diagram{NoRef_IOProduction}
\]
Think of $I$ and $O$ as choosing inputs and outputs. Given a production with interface $(I,O)$, we say that a graph $G'$ is a \emph{direct $(I,O)$-derivation} of $G$ if there is a diagram
\[
	\label{NoRef_IODerivation}
	\diagram{NoRef_IODerivation}
\]
where the bottom squares are pushouts. We denote this as $G \rightsquigarrow G'$. An \emph{$(I,O)$-grammar} $(\mathcal{G},\mathcal{P})$ consists of a collection of graphs $\mathcal{G}$ and another of $(I,O)$-productions $\mathcal{P}$. Again, an $(I,O)$-grammar generates a language consisting of all graphs $G$ such that there is a chain of direct $(I,O)$-derivations $G_0 \rightsquigarrow G_1 \rightsquigarrow \dotsm \rightsquigarrow G_n=G$ from $\mathcal{P}$ such that $G_0 \in \mathcal{G}$. However, this time, we require such a chain to respect the inputs and outputs in the sense that 
\begin{equation}
	\label{diag.DerivationChain}
	\diagram{Diag_DerivationChain}
\end{equation}
Now we can put these productions with interfaces into our bicategorical framework.

Consider the full sub-bicategory of $\csC$ for $\cat{C} := \cat{Graph}$ with objects the finite, discrete graphs. Here, a $1$-cell is a cospan $G \from I \cospan O$ where $G$ is a graph and $I$, $O$ are discrete graphs.  A $2$-cell between $1$-cells $G'$ and $G''$ is a span of cospans $G \from G' \span G''$ which we think of as an $(I,O)$-derivation $G' \rightsquigarrow G''$.  We will call this sub-bicategory $\cat{Rewrite}$ because the $2$-cells are exactly all the possible ways to rewrite one graph into another so that inputs and outputs are preserved. Given any $2$-cell $G \from G' \span G''$ in $\cat{Rewrite}$, then the diagram 
\[
	\label{NoRef_2CellToRewrite}
	\diagram{NoRef_2CellToRewrite}
\]
gives an $(I,O)$-derivation $G' \rightsquigarrow_G G''$. Conversely, any $(I,O)$-derivation \eqref{diag.DerivationChain} can be made into composable $2$-cells $E_i \from G_i \span G_{i+1}$ where the maps from $I$ and $O$ are the evident composites. Then, the vertical composition of the resulting $2$-cells gives us the desired span of cospans.

To better illustrate this, we will provide a concrete example of the dictionary between $\cat{Rewrite}$ and double pushout graph rewriting.  Suppose we were given an $(I,O)$-derivation, with $I=\{\ast\}=O$, induced from the following double pushout graph rewriting diagram
\[
	\label{NoRef_DPOGraphRewritingExample}
	\diagram{NoRef_DPOGraphRewritingExample}
\]
where the functions are described by the labeling, the inputs are circled, and the outputs are squared. In words, we have rewritten the graph on the lower left by remove an edge $a \to c$ and adding a loop $c \to c$. The pull back of the span
\[
	\label{NoRef_DPOGraphRewritingExample2}
	\diagram{NoRef_DPOGraphRewritingExample2}
\]
is the graph
\[
	\label{NoRef_DPOGraphRewritingExample3}
	\diagram{NoRef_DPOGraphRewritingExample3}
\]
The corresponding $2$-cell is the diagram
\[
	\label{NoRef_DPOGraphRewritingExample4}
	\diagram{NoRef_DPOGraphRewritingExample4}
\]
Here we witness the advantage that graph rewriting has over graph morphisms in the realm of expresivity. There is no way to replace the $2$-cell above with a map of graph cospans. 

There is nothing inherently special, from a mathematical point of view, about working with graphs and their morphisms.  It is in applications where graphs gain importance.  We can actually create categories analogous to $\cat{Rewrite}$ with any topos.  
%
%
%
%
%
%
%%%%%%%%%%%%%%%%%%%%%%%%%%%%%%%%%%%%%%%%%%%%%%%%%%%%%%%%%%%%%%%%%%%%%%%%%%%
\section{Conclusion and further work}  %                      CONCLUSION
%%%%%%%%%%%%%%%%%%%%%%%%%%%%%%%%%%%%%%%%%%%%%%%%%%%%%%%%%%%%%%%%%%%%%%%%%

Our primary motivation for constructing this bicategory is as a way to study the gluing of graphs together in a way compatible with chosen input and output nodes. To this end, we defined a bicategory $\cat{Rewrite}$. This is a very large bicategory and in practice, one begins with a grammar and studies the resulting language. So, in an upcoming collaboration with Kenny Courser, we will look at relating languages to sub-bicategories of $\cat{Rewrite}$ generated by a grammar. In this same paper, we will study the structure of $\cat{MonSp(Csp(C))}$ alongside similar bicategories.

\end{document}